\documentclass{amsart}
\usepackage{amsfonts,amssymb,amscd,amsmath,enumerate,verbatim,calc}
\usepackage{xy}

\newcommand{\wrt}{with respect to}

\newcommand{\R}{\mathcal{R} }
\newcommand{\Sc}{\mathcal{S}}
\newcommand{\F}{\mathcal{F}}
\newcommand{\Z}{\mathbb{Z} }

\newcommand{\rt}{\rightarrow}

\newcommand{\ov}{\overline}

\newcommand{\grade}{\operatorname{grade}}

\newcommand{\height}{\operatorname{height}}

\theoremstyle{plain}

\newtheorem{theorem}{Theorem}[section]

\theoremstyle{definition}

\theoremstyle{remark}

\begin{document}

\title[Symbolic powers]{Symbolic powers  of monomial ideals}
\author{Tony~J.~Puthenpurakal}
\date{\today}
\address{Department of Mathematics, IIT Bombay, Powai, Mumbai 400 076}

\email{tputhen@math.iitb.ac.in}
\subjclass{Primary 13D40; Secondary 13H15}
\keywords{quasi-polynomials, monomial ideals, symbolic powers}
 \begin{abstract}
Let $A = K[X_1,\ldots, X_d]$ and let $I$, $J$ be monomial ideals in $A$. Let $I_n(J) = (I^n \colon J^\infty)$ be the $n^{th}$ symbolic power of $I$ \wrt \ $J$. It is easy to see that the function $f^I_J(n) = e_0(I_n(J)/I^n)$ is of quasi-polynomial type, say of period $g$ and degree $c$. For $n \gg 0$ say
\[
f^I_J(n) = a_c(n)n^c + a_{c-1}(n)n^{c-1} + \text{lower terms},
\] 
where for $i = 0, \ldots, c$,  $a_i \colon \mathbb{N} \rt \mathbb{Z}$ are periodic functions of period $g$ and $a_c \neq 0$.
 In \cite[ 2.4]{HPV} we (together with Herzog and Verma) proved that  $\dim I_n(J)/I^n$ is constant for $n \gg 0$ and $a_c(-)$ is a constant.  In this  paper we prove
 that if  $I$ is generated by some elements  of the same degree and $\height I \geq 2$ then
$a_{c-1}(-)$ is also a constant.
\end{abstract}
 \maketitle
\section{introduction}
Let us recall that a function $f \colon \mathbb{N} \rt \mathbb{Z}$ is called  of quasi-polynomial of period $g$ and degree $c$ if 
\[
f(n) = a_c(n)n^c + a_{c-1}(n)n^{c-1} + \text{lower terms}
\]
where for $i = 0, \ldots, c$,  $a_i \colon \mathbb{N} \rt \mathbb{Z}$ are periodic functions of period $g$ and $a_c \neq 0$.
 Many interesting functions are of quasi-polynomial type 
(i.e., it coincides with a quasi-polynomial for $n \gg 0$). For instance the Hilbert function of a graded module over a not-necessarily 
standard graded $K$-algebra is of quasi-polynomial type; see \cite[4.4.3]{BH}.

Let $f$ be of quasi-polynomial type. Following Erhart we  let the  \emph{grade} of  $f$ 
 denote the smallest integer $\delta \geq -1$  such that $a_j(-)$ is constant  for all $j > \delta$. Erhart had conjectured that if $P$ is a $d$-dimensional rational polytope   in $\mathbb{R}^m$ such that  for some $\delta$ the affine span of every $\delta$-dimensional face of $P$ contains a point with integer coordinates then  the grade of the Erhart quasi-polynomial of $P$ is $< \delta$. This conjecture was proved by  
 independently by McMullen  \cite{M} and Stanley \cite[Theorem 2.8]{S}. For a purely algebraic proof see \cite[Theorem 5]{BI}. 

\s\label{int} Our interest in quasi-polynomials arise in the following context (see \cite{HPV}): Let $A = K[X_1,\ldots, X_d]$ be a standard graded polynomial ring over a field $K$.
If $M$ is a graded $A$-module then let $e_0(M)$ denote its multiplicity.
 Let $I, J$ be monomial ideals. Let
$I_n(J) = I^n \colon J^\infty$ be the $n^{th}$ symbolic power of $I$ \wrt \ $J$. Then by \cite[3.2]{HHT} we have that $\bigoplus_{n \geq 0} I_n(J)$ is a finitely generated $A$-algebra. From this it is easy to see that $f^I_J(n) = e_0(I_n(J)/I^n)$ is of quasi-polynomial type,  say of period
$g$ and degree $c$. We write 
\[
f^I_J(n) = a_c(n)n^c + a_{c-1}(n)n^{c-1} + \text{lower terms}.
\]
In \cite[ 2.4]{HPV} we proved that  $\dim I_n(J)/I^n$ is constant for $n \gg 0$ and $a_c(-)$ is a  positive constant.  In this short paper we prove
\begin{theorem}\label{main}[with hypotheses as in \ref{int}] Further assume that $I$ is generated by some elements of the same degree and $\height I \geq 2$. Then
$a_{c-1}(-)$ is also a constant.
\end{theorem}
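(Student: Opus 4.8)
The plan is to study the whole family at once by assembling the modules $T_n := I_n(J)/I^n$ into a single bigraded object and then reading off the quasi-polynomial $f^I_J$ from a generating function. First I would note the identification $T_n = H^0_J(A/I^n)$ (the $J$-torsion of $A/I^n$), so that $T := \bigoplus_{n\ge 0} T_n t^n$ is a finitely generated $\Z$-bigraded module over the symbolic Rees algebra $\mathcal{R}^s := \bigoplus_n I_n(J)t^n$, which is a finitely generated $A$-algebra by \cite[3.2]{HHT}. Because $I$ is generated in a single degree $\delta$, I would regrade by declaring the internal degree of a form of (old) degree $m$ sitting in Rees-degree $n$ to be $m-\delta n$; under this regrading the ordinary Rees algebra $\mathcal{R}(I)=A[It]$ becomes standard bigraded (variables in bidegree $(1,0)$, the generators $f_i t$ of $I$ in bidegree $(0,1)$), and $A=(\mathcal{R}^s)_{(*,0)}$ is a standard graded polynomial ring in the first grading. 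With $s$ recording the internal degree and $t$ the Rees degree, the bivariate Hilbert series $\Psi(s,t)=\sum_{m,n}\dim_K T_{(m,n)}\,s^m t^n$ is then a rational function whose denominator is a product of binomials $1-s^{a_i}t^{b_i}$ coming from the (finitely many) algebra generators of $\mathcal{R}^s$.

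Next I would extract $f^I_J$ from $\Psi$. By \cite[2.4]{HPV} the fibre $T_n$ has constant dimension $D$, and for each $n$ the one-variable series $\sum_m \dim_K T_{(m,n)} s^m = Q_n(s)/(1-s)^D$ satisfies $e_0(T_n)=Q_n(1)$. Hence the multiplicity generating function
\[
\Phi(t):=\lim_{s\to 1}(1-s)^D\,\Psi(s,t)
\]
is a rational function of $t$ with $f^I_J(n)=e_0(T_n)=[t^n]\Phi(t)$ for $n\gg 0$ (the interchange of $\lim_{s\to1}$ with coefficient extraction in $t$ being justified by the eventual constancy of the fibre-dimension $D$). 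All poles of $\Phi$ lie at roots of unity, and the coefficient quasi-polynomial $[t^n]\Phi(t)$ is governed by their orders: a pole of order $k$ at a root of unity $\zeta$ contributes a term of size $\zeta^{-n} n^{k-1}/(k-1)!$ to $f^I_J(n)$. Thus $c$ equals one less than the maximal pole order, and $a_c(-)$ is constant precisely because---by \cite[2.4]{HPV}---the pole of maximal order $c+1$ occurs only at $t=1$. The same bookkeeping shows that $a_{c-1}(-)$ is constant if and only if $\Phi$ has no pole of order exactly $c$ at any root of unity $\zeta\neq 1$; poles at such $\zeta$ of order $\le c-1$ only perturb coefficients of degree $\le c-2$. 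This reduces Theorem \ref{main} to a statement purely about the pole orders of $\Phi$ at nontrivial roots of unity.

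To finish I would pass to the polytopal picture underlying $\Phi$: the denominator data of $\Psi$ describes a rational cone (equivalently a rational polytope $P$ of dimension $c$) built from the exponent vectors of the generators of $\mathcal{R}^s$, and the pole orders of $\Phi$ at $\zeta\neq1$ are exactly the obstruction measured by Ehrhart's grade. Here the hypothesis $\height I\ge 2$ is what I expect to force every codimension-one face (facet) of $P$ to have a lattice point in its affine span: localizing at the height-one monomial primes shows the codimension-one boundary of the symbolic filtration coincides with that of the ordinary powers, so the facets are ``integral''. Granting this, the criterion of McMullen and Stanley \cite{M},\cite[Theorem 2.8]{S} (in the algebraic form \cite[Theorem 5]{BI}), applied with $\delta=c-1$, yields that the grade of $f^I_J$ is $<c-1$, i.e. $a_{c-1}(-)$ is constant. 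The main obstacle, and the step where both hypotheses are genuinely used, is establishing this integrality of the facets: single-degree generation is needed to give $P$ a clean toric description in the first place, while $\height I\ge 2$ must be leveraged to rule out a codimension-one face whose affine span misses the lattice---equivalently, to exclude an order-$c$ pole of $\Phi$ at some $\zeta\neq1$.
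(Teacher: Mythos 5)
Your route is genuinely different from the paper's, but it stops exactly where the theorem actually lives. The reduction in your second paragraph is fine in principle: modulo the analytic care needed to define $\Phi(t)=\lim_{s\to 1}(1-s)^D\Psi(s,t)$ (the denominator $\prod(1-s^{a_i}t^{b_i})$ has factors with $b_i=0$ that vanish at $s=1$, and you must check their number matches $D$ before the limit is a rational function of $t$), it is true that constancy of $a_{c-1}$ amounts to the absence of an order-$c$ pole of $\Phi$ at a root of unity $\zeta\neq 1$. But the third paragraph, which is the entire content of the theorem, is an announcement rather than a proof: you write that $\height I\geq 2$ is ``what I expect to force'' integrality of the facets and that it ``must be leveraged'' to exclude the bad pole. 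Two concrete things are missing. First, $f^I_J(n)=e_0(I_n(J)/I^n)$ is a multiplicity, not a lattice-point count $\#(nP\cap\Z^m)$; the polytope $P$ whose Ehrhart quasi-polynomial is supposed to equal $f^I_J$ is never constructed, so McMullen--Stanley and \cite[Theorem 5]{BI} have nothing to be applied to. Second, even granting such a $P$, the claim that ``localizing at height-one monomial primes'' makes its facets integral is not an argument; $\height I\geq 2$ says $I$ has no height-one components, but the module $I_n(J)/I^n$ is governed by the components of $I^n$ along $V(J)$, and no link between these and the facet structure of $P$ is established.

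For contrast, the paper uses the two hypotheses in a much more pedestrian way. Single-degree generation gives a \emph{homogeneous} $I$-superficial element $u$, which also satisfies $(I_{n+1}(J):u)=I_n(J)$ for $n\gg 0$; passing to the Rees modules $L^\F$, $L^I$ and dividing by $p=ut$ produces, for $n\gg 0$, exact sequences
\[
0 \rt I_{n-1}(J)/I^{n-1} \rt I_n(J)/I^n \rt \ov{I_n(J)}/\ov{I^n}\rt 0
\]
over $R=A/(u)$. The hypothesis $\height I\geq 2$ enters only to guarantee $\grade\ov{I}\geq 1$, so that Theorem \ref{gen} applies to the filtration $\ov{\F}$ on $R$ and gives that $g(n)=e_0(\ov{I_n(J)}/\ov{I^n})$ has \emph{constant} leading coefficient in degree $c-1$. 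Then $f(n)-f(n-1)=g(n)$ forces $a_{c-1}(n)-a_{c-1}(n-1)$ to be eventually constant, and a periodic function with eventually constant first difference is constant. If you want to salvage your approach, the facet-integrality statement (equivalently, the absence of the order-$c$ pole at $\zeta\neq 1$) is the lemma you would have to isolate and prove; as written, it is assumed, not shown.
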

\section{Preliminaries}
Let $K$ be a field and let $R = \bigoplus_{m \geq 0}R_m$ be a standard graded, finitely generated $K = R_0$-algebra. Let $M = \bigoplus_{m \geq 0}M_m $ be a graded $R$-module. Its Hilbert series is $H_M(z) = \sum_{m \geq 0} \dim_K M_m$. It is well-known that $H_M(z) = h_M(z)/(1-z)^{\dim M}$ where $h_M(z) \in \Z[z]$. The number $e_0(M) = h_M(1)$ is called the \emph{multiplicity} of $M$. 

\s Let $I$ be a homogeneous ideal in $R$. An element $x \in I$ is called $I$-\emph{superficial} if there exists $a$ and $n_0$ such that 
$(I^{n+1} \colon x)\cap I^a = I^n$ for all $n \geq n_0$. Superficial elements exist when $K$ is infinite. If $\grade(I) > 0$ then by using Artin-Rees lemma it can be shown that $(I^{n+1} \colon x)  = I^n$ for all $n \gg 0$. If $I = (u_1, \ldots, u_l)$ and $K$ is infinite then a general linear sum of the $u_i^{s}$ is an $I$-superficial element. Thus $I$-superficial    elements need not be homogeneous even if 
$I$ is. However if $I$ is generated by some elements of the same degree then it is clear that there exists homogeneous  $I$-superficial elements.

The following result can be shown in the same way as in the proof of  Proposition 2.3 and Theorem 2.4 in \cite{HPV}.
\begin{theorem}\label{gen}(with setup as above). Let $\F =\{ J_n \}_{n \geq 0}$ be a multiplicative filtration of homogeneous ideals with $J_0 = R$ and $I \subseteq J_1$. Further assume that  $\Sc(\F) = \bigoplus_{n \geq 0}J_n$ is a finitely generated  $R$-algebra. Then the function
$f^I_\F(n) = e_0(J_n/I^n)$ is of quasi-polynomial type, say of degree $c$ and period $g$. For $n \gg 0$
we write $f^I_\F(n) = a_c(n)n^c + \text{lower terms}$. If $\grade(I) > 0$ then  $\dim (J_n/I^n)$ is constant for $n \gg 0$ and $a_c(-)$ is a constant.
\end{theorem}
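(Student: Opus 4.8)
The plan is to move to the bigraded picture and to trade the awkward quotient $\Sc(\F)/\R(I)$ for a difference of two honest Hilbert functions. Introduce the bigraded $K$-algebras $B = \Sc(\F) = \bigoplus_n J_n$ and $A = \R(I) = \bigoplus_n I^n$, where the second grading is the internal grading inherited from $R$; thus $A \sub B$, both are finitely generated over $R$ (by hypothesis for $B$, always for $A$), hence Noetherian and bigraded, and in the internal direction both are standard graded over $R$ since $R$ is. For integers $n,m$ put $G(n,m) = \dim_K (J_n)_m$ and $H(n,m) = \dim_K (I^n)_m$. Because $\dim_K$ is additive on the exact sequence $0 \rt I^n \rt J_n \rt J_n/I^n \rt 0$, we get $\dim_K (J_n/I^n)_m = G(n,m) - H(n,m)$. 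This additivity is the device that side-steps the central annoyance, namely that $\bigoplus_n J_n/I^n$ is \emph{not} a finitely generated module over $\R(I)$ (equivalently, $\Sc(\F)$ is not module-finite over $\R(I)$), so no single Hilbert-function theorem applies to the quotient directly.

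First I would show that each of $G$ and $H$ is, for $n,m \gg 0$, given by a function that is polynomial in $m$ and a quasi-polynomial of period $g$ in $n$. This is the bigraded Hilbert-function statement underlying \cite[2.3]{HPV}: since $B$ and $A$ are finitely generated but non-standard in the $n$-direction, one picks a common Veronese index $g$ so that $B^{\langle g\rangle}$ and $A^{\langle g\rangle}$ are standard graded over $R$ and each residue strand $\bigoplus_k J_{gk+i}$, $\bigoplus_k I^{gk+i}$ is a finitely generated graded module over it; the multigraded form of the quasi-polynomial Hilbert-function theorem \cite[4.4.3]{BH} then applies strand by strand and manufactures the period $g$. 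Subtracting, $D(n,m) := \dim_K(J_n/I^n)_m$ is polynomial in $m$ and quasi-polynomial of period $g$ in $n$ for $n,m \gg 0$. Writing $D(n,m) = \sum_j \beta_j(n)m^j$ with each $\beta_j$ a quasi-polynomial of period $g$, the module $J_n/I^n$ has Hilbert polynomial $D(n,-)$, so $\delta_n := \dim(J_n/I^n) = 1 + \deg_m D(n,-)$ and $e_0(J_n/I^n) = (\delta_n-1)!\,\beta_{\delta_n-1}(n)$. On each residue class mod $g$ the top $m$-degree is eventually constant, so $\delta_n$ stabilizes there to a value $\delta$, and $f^I_\F(n) = (\delta-1)!\,\beta_{\delta-1}(n)$ is a quasi-polynomial in $n$ of period $g$. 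This proves $f^I_\F$ is of quasi-polynomial type, of degree $c = \deg_n \beta_{\delta-1}$ with periodic leading coefficient $a_c(-)$.

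For the refinement under $\grade(I) > 0$ I would bring in an $I$-superficial element $x \in I \sub J_1$; by the Preliminaries, $\grade(I) > 0$ gives $(I^{n+1} \colon x) = I^n$ for $n \gg 0$. Multiplication by $x$ carries $I^n$ into $I^{n+1}$ and $J_n$ into $J_{n+1}$, inducing $\mu_n \colon J_n/I^n \rt J_{n+1}/I^{n+1}$, whose kernel is $(J_n \cap (I^{n+1}\colon x))/I^n = (J_n \cap I^n)/I^n = 0$ for $n \gg 0$. Thus each $J_n/I^n$ embeds as a submodule of $J_{n+1}/I^{n+1}$, so $\dim(J_n/I^n)$ is eventually nondecreasing and bounded by $\dim R$, hence eventually \emph{globally} constant equal to $\delta$: this is the dimension-constancy assertion. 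When moreover a homogeneous $I$-superficial element is available (as when $I$ is equigenerated), $\mu_n$ is a graded map of fixed internal degree, so after the harmless twist it exhibits $J_n/I^n$ as a same-dimension submodule of $J_{n+1}/I^{n+1}$, giving $e_0(J_n/I^n) \le e_0(J_{n+1}/I^{n+1})$; a nondecreasing quasi-polynomial of period $g$ must have constant leading coefficient, forcing $a_c(-)$ constant.

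The hard part is precisely this last step in full generality. The bigraded machinery supplies only period-$g$ (residue-wise) data, so a priori both $\delta_n$ and $a_c(-)$ could oscillate with $n \bmod g$, and it is exactly $\grade(I) > 0$ and the superficial element that must remove the oscillation. The delicate point is that for general $I$ the natural $I$-superficial element need not be homogeneous, so $\mu_n$ need not be a graded map and the clean twist-and-compare argument for $e_0$ breaks; one must instead show that passing to the difference module is additive at top order across the reduction, i.e. identify the top coefficient $a_c$ with an honest multiplicity attached to the one-dimension-lower data (a mixed-multiplicity computation for $\Sc(\F)$ versus $\R(I)$) that is a single number independent of $n \bmod g$. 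Controlling this top-order additivity — using $(I^{n+1}\colon x) = I^n$ together with the finite generation of $\Sc(\F)$ — is where the real work lies.
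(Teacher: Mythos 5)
Your architecture is the intended one (the paper itself gives no proof of this theorem, deferring to \cite[2.3, 2.4]{HPV}, and that argument is exactly of your shape): quasi-polynomiality of $f^I_\F$ by comparing the bigraded Hilbert data of $\Sc(\F)$ and of $\R(I)$ after a Veronese reduction, and then an $I$-superficial element $x$ with $(I^{n+1}\colon x)=I^n$ for $n\gg 0$ producing eventual injections $\mu_n\colon J_n/I^n\hookrightarrow J_{n+1}/I^{n+1}$ (your kernel computation $\ker\mu_n=(J_n\cap(I^{n+1}\colon x))/I^n=0$ is correct). Since Krull dimension ignores the grading, the conclusion that $\dim(J_n/I^n)$ is eventually nondecreasing, bounded by $\dim R$, and hence constant is also fine. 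Up to that point the proposal is essentially the paper's route.

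The genuine gap is the one you flag in your last paragraph and then leave unresolved: the constancy of $a_c(-)$ when $I$ is not equigenerated, so that no homogeneous superficial element is available and $\mu_n$ is not a graded map. The theorem is stated for arbitrary homogeneous $I$ with $\grade(I)>0$, so as written your proof only covers a special case, and the ``mixed-multiplicity computation'' you gesture at is not carried out. The step is, however, much cheaper than you make it: gradedness of $\mu_n$ is irrelevant once the dimensions have stabilized at $\delta$. The graded multiplicity $e_0(M)$ coincides with the local multiplicity $e(\m;M)$ at the irrelevant maximal ideal, and the associativity formula $e(\m;M)=\sum_{\mathfrak{p}}\ell(M_{\mathfrak{p}})\,e(\m;R/\mathfrak{p})$ (sum over primes with $\dim R/\mathfrak{p}=\delta$; see \cite[4.7.8]{BH}) is monotone under \emph{any} inclusion of modules of the same dimension, since localization lengths only increase. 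Hence the ungraded injection $\mu_n$ already yields $e_0(J_n/I^n)\le e_0(J_{n+1}/I^{n+1})$ for $n\gg 0$, so $f^I_\F$ is eventually nondecreasing, and your own closing observation --- an eventually nondecreasing nonnegative quasi-polynomial of period $g$ has the same degree and the same leading coefficient on every residue class --- finishes the proof. (For the single application made of this theorem in the paper, to the image of an equigenerated ideal in $A/(u)$, the case you did prove would suffice; but the statement you were asked to prove is the general one.)
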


\s \label{L}  Let $\F =\{ J_n \}_{n \geq 0}$ be a multiplicative filtration of homogeneous ideals with $J_0 = R$ and $I \subseteq J_1$.
 Let $\R = R[It]$ be the Rees-algebra of $I$. Then $L^\F = \bigoplus_{n \geq 0} R/J_{n+1}$ has a structure of $\R$-module. This can be seen as follows.
Note $\Sc(\F) = \bigoplus_{n \geq 0}J_n$ is a graded $\R$-module. We have a short exact sequence 
\[
0 \rt \Sc(\F) \rt R[t] \rt L^\F(-1) \rt 0.
\]
This gives an $\R$-module structure on $L^\F(-1)$ and so on $L^\F$.
\section{Proof of Theorem \ref{main}}
In this section we give
\begin{proof}[Proof of Theorem \ref{main}]
We may assume that $K$ is an infinite field.  \\
Let $f(n) = e_0(I_n(J))/I_n)$. Then by our earlier result $f$ is of polynomial type say of degree $c$ and period $g$.  Say
\[
f(n) = a_c(n)n^c + a_{c-1}(n)n^{c-1} + \text{lower terms} \quad \text{for } n \gg 0,
\]
where $a_c(n) = a$ is a positive constant. We have nothing to show if $c = 0$. So assume $c > 0$. We also have $\dim I_n(J)/I^n$  is constant for $n \gg 0$. Let this constant be $r$.

As $I$ is generated by some elements of the same degree we can choose homogeneous $u$ which is $I$-superficial. Say $(I^{n+1} \colon u) = I^n$ for $n \geq n_0$.

Claim-1:  $(I_{n+1}(J) \colon u) = I_{n}(J)$ for $n \geq n_0$. \\
Fix $n \geq n_0$. Let $p \in (I_{n+1}(J) \colon u) $. Then $up \in I_{n+1}(J)$. So $upJ^r \subseteq I^{n+1}$ for some $r > 0$. Then
$pJ^r \subseteq (I^{n+1}\colon u) = I^n$. So $p \in I_n(J)$.

Set $\F = \{I_n(J) \}_{n \geq 0}$.  Let $L^\F = \bigoplus_{n \geq 0 }A/I_{n+1}(J)$ and $L^I = \bigoplus_{n \geq 0 }A/I^{n+1}$ be given the $\R = A[It]$-module structures as described in \ref{L}. Set \\  $W = \bigoplus_{n \geq 0 }I_{n+1}(J)/I^{n+1}$. Then we have a short exact sequence of $\R$-modules
\[
0 \rt W \rt L^I \rt L^\F \rt 0. \tag{*}
\]
Let $p = ut$. By claim-1;  $\ker (L^\F(-1) \xrightarrow{p} L^\F)$ is concentrated in degrees $\leq n_0$. So we have a short exact sequence
\[
0 \rt \ov{W}_{n \geq n_0 + 1} \rt \ov{L^I}_{n \geq n_0 + 1} \rt  \ov{L^\F}_{n \geq n_0 + 1} \rt 0; \tag{**}
\]
(here $\ov{(-)} = (-)/p(-)$). We note that $\ov{L^\F}_n = A/(I_{n+1}(J), u)$ and \\  $\ov{L^I}_n = A/(I^{n+1}, u)$. Consider the standard graded $K$-algebra $R = A/(u)$. Consider the $R$-ideal $\ov{I} = I/(u)$ and the Noetherian $R$-filtration $\ov{\F } = \{ \ov{I_n(J)} \}_{n \geq 0} $ where $\ov{I_n(J)} = (I_n(J) + (u))/(u)$. Notice $\grade I = \height I \geq 2$. So $\grade \ov{I} \geq 1$. Set $g(n) = e_0(\ov{I_n(J)}/\ov{I^n})$. By \ref{gen} we have $\dim \ov{I_n(J)}/\ov{I^n}$ is constant for $n \gg 0$ (say equal to $s$). Furthermore $g(n)$ is of quasi-polynomial type say of period $g'$ and degree $l$. Say 
\[
g(n) = b_l(n)n^l + \text{lower terms}.
\]
We also have $b_l (-) = b$ a constant.

By (*) and (**) we have a short exact sequence for all $n \gg 0$
\[
 0 \rt I_{n-1}(J)/I^{n-1}  \rt I_n(J)/I^n  \rt \ov{I_n(J)}/\ov{I^n} \rt 0.
\]
So we have  $r \geq s$. If $r > s$ then notice $c = 0$ which is a contradiction. So $r = s$. It follows that $l = c -1$. As $f(n) - f(n-1) =g(n)$
we get that 
\[
ac + a_{c-1}(n) - a_{c-1}(n-1)  = b  \quad \text{for all} \ n \gg 0. 
\]
So for $n \gg 0$ we have  $a_{c-1}(n) = \alpha n + \beta$ for  $n \gg 0 $ for some constants $\alpha, \beta$; see \cite[4.1.2]{BH}.
As $a_{c-1}(-)$ is periodic it follows from  that $\alpha = 0$ and so $a_{c-1}(-)$ is a constant. The result follows.
\end{proof}

\end{document}